\newtheorem{theorem}{Theorem}[section]
\newtheorem{corollary}[theorem]{Corollary}
\newtheorem{remark}[theorem]{Remark}
\newenvironment{proof}
{\bigskip\noindent{\sc Proof.}\ \ \rm }{\hfill$\Box$\bigskip}
\title{ Some limit theorems for rescaled  Wick powers }
\author{ Alberto Lanconelli }
\date{\empty}
\begin{document}
\maketitle

\!\!\!\!\!\!\!\!\!\!\!\!\!\!\!\!\!\!\!\!
\begin{center}
{ \noindent
\begin{tabular}{cc}
& Dipartimento di Matematica\\
& Universita' degli Studi di Bari\\
& Via E. Orabona, 4\\
& 70125 Bari - Italia\\
& E-mail: lanconelli@dm.uniba.it \\

\end{tabular}
}
\end{center}

\numberwithin{equation}{section}

\bigskip

\begin{abstract}
We establish the strong $\mathcal{L}^2(\mathcal{P})$-convergence of
properly rescaled Wick powers as the power index tends to infinity.
The explicit representation of such limit will also provide the
convergence in distribution to normal and log-normal random
variables. The proofs rely on some estimates for the
$\mathcal{L}^2(\mathcal{P})$-norm of Wick products and on the
properties of second quantization operators.
\end{abstract}

     Key words and phrases: Wick product, second quantization operator, convergence in distribution.\\
AMS 2000 classification: 60H40, 60F25, 60H15.
\section{ Introduction }

In the last decade several authors have identified the Wick product
as a necessary tool for the study of certain types of stochastic
partial differential equations (SPDEs) or for the solution to some
related problems. This is motivated by the crucial features of the
Wick product: firstly it represents a bridge between stochastic and
classical integration theories; secondly it provides an efficient
way of multiplying infinite dimensional distributions. Since several
SPDEs of interest do not admit classical solutions, the possibility
of treating nonlinearities becomes fundamental. Important examples
in these regards are the stochastic quantization equation, which was
studied among others in \cite{DD},\cite{DT} and \cite{GG}, and the
KPZ equation, studied for instance in \cite{BG} and \cite{C}. Also
the problem of finding It\^o's type formulas for SPDEs leads in a
natural way to the use of Wick powers as a renormalization
technique. This is shown in \cite{Z} and \cite{L}. We also mention
the book \cite{HOUZ} which proposes a systematic use of Wick
products for the formulation and the study of a variety of SPDEs.\\

Let us briefly introduce the Wick product (see the next section for
precise definitions). Consider two multiple It\^o integrals
$I_n(h_n)$ and $I_m(g_m)$ where $h_n\in\mathcal{L}^2([0,T]^n)$ and
$g_m\in\mathcal{L}^2([0,T]^m)$. The well-known Hu-Meyer formula
establishes that
\begin{eqnarray}
I_n(h_n)\cdot I_m(g_m)=\sum_{r=0}^{n\wedge m}r!{n \choose r}{m
\choose r}I_{n+m-2r}(h_n\hat{\otimes}_rg_m),
\end{eqnarray}
where
\begin{eqnarray}
(h_n\otimes_rg_m)(t_1,...,t_{n+m-2r}):=\int_{[0,T]^r}h_n(t_1,...,t_{n-r},\bold{s})
g_m(t_{n-r+1},...,t_{n+m-2r},\bold{s})d\bold{s},
\end{eqnarray}
and $\hat{}$ stands for symmetrization. If $h_n$ and $g_m$ are only
tempered distributions then $I_n(h_n)$ and $I_m(g_m)$ become
generalized random variables. In this case the pointwise product
(1.1) is not anymore well defined due to the presence of the trace
terms (1.2) that do not make sense in this new situation. To
overcome this problem one can drop these problematic terms and
define  a new product obtained keeping only the term with $r=0$ in
the sum (1.1), namely:
\begin{eqnarray*}
I_n(h_n)\diamond I_m(g_m):=I_{n+m}(h_n\hat{\otimes} g_m).
\end{eqnarray*}
This is called \emph{Wick product} of $I_n(h_n)$ and $I_m(g_m)$. If
for example we denote by $W_t$ the time derivative of a Brownian
motion $B_t$ then we can write $W_t=I_1(\delta_t)$ ($\delta_t$
stands for the Dirac's delta function concentrated in $t$) and
obtain
\begin{eqnarray*}
W_t\diamond W_t=I_2(\delta_t\otimes\delta_t).
\end{eqnarray*}
Observe that applying formally (1.1) we get
\begin{eqnarray*}
W_t\diamond W_t="W_t^2-\int_0^T\delta_t^2(s)ds".
\end{eqnarray*}
The above mentioned bridge between stochastic and classical
integration theories can be now formalized precisely as
\begin{eqnarray*}
\int_0^T\xi_tdB_t=\int_0^T\xi_t\diamond W_t dt,
\end{eqnarray*}
where the left hand side denotes the It\^o integral of the
stochastic process $\xi_t$.\\

The aim of the present paper is the investigation of the limiting
behavior of the sequence
\begin{eqnarray}
X^{\diamond n}:=X\diamond\cdot\cdot\cdot\diamond X,\quad n\geq 1,
\end{eqnarray}
as $n$ goes to infinity. The motivation for doing this is twofold:
on one hand Wick powers of the type (1.3) appear in the formulation
of the stochastic quantization equation (see \cite{DD}); on the
other hand the Wick product, as suggested in \cite{KSS}, can be
viewed as a convolution between (generalized) random variables.
Therefore a theorem about the limiting behavior of the sequence in
(1.3) constitutes a result in the spirit of the central
limit theorem for the convolution $\diamond$.\\
We will prove that for any square integrable $X$ a properly rescaled
version of the sequence in (1.3) converges in the strong topology of
$\mathcal{L}^2(\mathcal{P})$ to a so-called stochastic exponential;
this result will imply the convergence in distribution to log-normal
random variables. We will also show that under the assumption of the
positivity of $X$ the logarithm of the above
mentioned sequence converges in distribution to a normal random variable.\\

The paper is organized as follows: Section 2 recalls some classical
background information and introduce the necessary definitions. We
refer the reader to the book \cite{N} for more detailed material.
Section 3 presents the main results of the paper together with some
important corollaries concerning convergence in distribution.

\section{ Preliminaries }

Let $(\Omega,\mathcal{F},\mathcal{P})$ be the classical Wiener space
over the time interval $[0,T]$ and denote by
$B_t(\omega):=\omega(t), t\in [0,T]$ the coordinate process which is
a Brownian motion under the measure $\mathcal{P}$. Set as usual
\begin{eqnarray*}
\mathcal{L}^2(\mathcal{P}):=\Big\{X:\Omega\to\mathbb{R}\mbox{
measurable s.t. }
E[|X|^2]:=\int_{\Omega}|X(\omega)|^2d\mathcal{P}(\omega)<+\infty\Big\},
\end{eqnarray*}
and
\begin{eqnarray*}
\Vert X\Vert:=(E[|X|^2])^{\frac{1}{2}}.
\end{eqnarray*}
According to the Wiener-It\^o chaos representation theorem any
$X\in\mathcal{L}^2(\mathcal{P})$ can be written uniquely as
\begin{eqnarray*}
X=\sum_{n\geq 0}I_n(h_n),
\end{eqnarray*}
where $I_0(h_0)=E[X]$ and for $n\geq 1$,
$h_n\in\mathcal{L}^2([0,T]^n)$ is a symmetric deterministic function
which is called the \emph{n-th order kernel of X}. Moreover for
$n\geq 1$, $I_n(h_n)$ stands for the $n$-th order multiple It\^o
integral of $h_n$ w.r.t. the Brownian motion $\{B_t\}_{0\leq t\leq
T}$. \\
By means of this representation the
$\mathcal{L}^2(\mathcal{P})$-norm of $X$ takes the following form:
\begin{eqnarray*}
\Vert X\Vert^2=\sum_{n\geq 0}n!|h_n|_{\mathcal{L}^2([0,T]^n)}^2.
\end{eqnarray*}

Given two square integrable random variables $X$ and $Y$ with
chaotic representations:
\begin{eqnarray*}
X=\sum_{n\geq 0}I_n(h_n)\mbox{ and }Y=\sum_{n\geq 0}I_n(g_n),
\end{eqnarray*}
we call \emph{Wick product of X and Y} the following quantity:
\begin{eqnarray*}
X\diamond Y:=\sum_{n\geq 0}I_n(k_n),\mbox{ with
}k_n:=\sum_{j=0}^nh_j\hat{\otimes}g_{n-j},
\end{eqnarray*}
where $\hat{\otimes}$ denotes the symmetric tensor product. We also
denote
\begin{eqnarray*}
X^{\diamond n}:=X\diamond\cdot\cdot\cdot\diamond X\mbox{
($n$-times). }
\end{eqnarray*}

In general $X\diamond Y$ does not belong to
$\mathcal{L}^2(\mathcal{P})$ since it may happen that
\begin{eqnarray*}
\Vert X\diamond Y\Vert^2=\sum_{n\geq
0}n!|k_n|_{\mathcal{L}^2([0,T]^n)}^2=+\infty.
\end{eqnarray*}

The next inequality is a straightforward generalization of Theorem 9
in \cite{KSS} where the Wick product of two random variables is
considered. This result, that will be of crucial importance in our
proofs, provides a sufficient condition for the
Wick product of random variables to be square integrable.\\
First we need to recall that for $\lambda\in\mathbb{R}$ we denote by
$\Gamma(\lambda)$ the following operator:
\begin{eqnarray*}
\Gamma(\lambda)X=\Gamma(\lambda)\sum_{n\geq 0}I_n(h_n):=\sum_{n\geq
0}\lambda^nI_n(h_n).
\end{eqnarray*}

\begin{theorem}
Let $X_1,X_2,...,X_n\in\mathcal{L}^2(\mathcal{P})$. Then
\begin{eqnarray*}
\Vert X_1\diamond\cdot\cdot\cdot\diamond X_n\Vert\leq
\Vert\Gamma(\sqrt{n})X_1\Vert\cdot\cdot\cdot\Vert\Gamma(\sqrt{n})X_n\Vert,
\end{eqnarray*}
or equivalently
\begin{eqnarray*}
\Big\Vert
\Gamma\Big(\frac{1}{\sqrt{n}}\Big)\Big(X_1\diamond\cdot\cdot\cdot\diamond
X_n\Big)\Big\Vert\leq \Vert X_1\Vert\cdot\cdot\cdot\Vert X_n\Vert.
\end{eqnarray*}
In particular for $X_1=\cdot\cdot\cdot=X_n=X$, we get
\begin{eqnarray*}
\Vert X^{\diamond n}\Vert\leq \Vert\Gamma(\sqrt{n})X\Vert^n,
\end{eqnarray*}
or equivalently
\begin{eqnarray*}
\Big\Vert \Gamma\Big(\frac{1}{\sqrt{n}}\Big)X^{\diamond
n}\Big\Vert\leq \Vert X\Vert^n.
\end{eqnarray*}
\end{theorem}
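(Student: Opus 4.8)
The plan is to argue directly on the Wiener--It\^o chaos expansions. I would write each $X_i=\sum_{k\geq0}I_k(h_k^{(i)})$ with symmetric kernels $h_k^{(i)}\in\mathcal{L}^2([0,T]^k)$ and abbreviate $a_k^{(i)}:=|h_k^{(i)}|_{\mathcal{L}^2([0,T]^k)}$. By the definition of $\diamond$, the $N$-th order kernel of $X_1\diamond\cdots\diamond X_n$ is $k_N=\sum_{k_1+\cdots+k_n=N}h_{k_1}^{(1)}\hat{\otimes}\cdots\hat{\otimes}h_{k_n}^{(n)}$ (iterated symmetric tensor product), so that
\[
\Vert X_1\diamond\cdots\diamond X_n\Vert^2=\sum_{N\geq0}N!\,|k_N|_{\mathcal{L}^2([0,T]^N)}^2,
\]
while from $\Gamma(\lambda)X=\sum_k\lambda^kI_k(h_k)$ one has $\Vert\Gamma(\sqrt{n})X_i\Vert^2=\sum_{k\geq0}n^k k!\,(a_k^{(i)})^2$. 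Hence, after squaring, the asserted inequality becomes a purely numerical statement about these two series, and it is this that I would establish.

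The first step is to eliminate the kernels. Since symmetrization is an orthogonal projection on $\mathcal{L}^2([0,T]^N)$, hence a contraction, one has $|h_{k_1}^{(1)}\hat{\otimes}\cdots\hat{\otimes}h_{k_n}^{(n)}|\leq|h_{k_1}^{(1)}\otimes\cdots\otimes h_{k_n}^{(n)}|=a_{k_1}^{(1)}\cdots a_{k_n}^{(n)}$, and the triangle inequality then gives $|k_N|\leq\sum_{k_1+\cdots+k_n=N}a_{k_1}^{(1)}\cdots a_{k_n}^{(n)}$. It therefore suffices to prove
\[
\sum_{N\geq0}N!\Big(\sum_{k_1+\cdots+k_n=N}a_{k_1}^{(1)}\cdots a_{k_n}^{(n)}\Big)^2\ \leq\ \prod_{i=1}^n\Big(\sum_{k\geq0}n^k k!\,(a_k^{(i)})^2\Big).
\]

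The core of the argument is a weighted Cauchy--Schwarz estimate of the inner sum in which the weights are the multinomial coefficients $\binom{N}{k_1,\ldots,k_n}$:
\[
\Big(\sum_{k_1+\cdots+k_n=N}a_{k_1}^{(1)}\cdots a_{k_n}^{(n)}\Big)^2\leq\Big(\sum_{k_1+\cdots+k_n=N}\binom{N}{k_1,\ldots,k_n}\Big)\Big(\sum_{k_1+\cdots+k_n=N}\frac{(a_{k_1}^{(1)})^2\cdots(a_{k_n}^{(n)})^2}{\binom{N}{k_1,\ldots,k_n}}\Big).
\]
By the multinomial theorem the first factor equals $n^N$, and $N!/\binom{N}{k_1,\ldots,k_n}=k_1!\cdots k_n!$. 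Multiplying by $N!$, summing over $N\geq0$, re-indexing the resulting double sum as a single sum over all $(k_1,\ldots,k_n)\in\mathbb{N}^n$, and writing $n^N=n^{k_1}\cdots n^{k_n}$, one sees that the left-hand side factorizes exactly into $\prod_{i=1}^n\big(\sum_k n^k k!\,(a_k^{(i)})^2\big)$, as desired; taking square roots yields the first inequality of the theorem, and the case $X_1=\cdots=X_n=X$ is immediate.

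For the ``equivalent'' reformulations one uses that $\Gamma(\lambda)$ acts chaos by chaos, whence it is multiplicative for the Wick product, $\Gamma(\lambda)(Y\diamond Z)=\Gamma(\lambda)Y\diamond\Gamma(\lambda)Z$, and satisfies $\Gamma(\lambda)\Gamma(\mu)=\Gamma(\lambda\mu)$; applying the inequality just proved with $\Gamma(1/\sqrt{n})X_i$ in place of $X_i$ turns it into $\Vert\Gamma(1/\sqrt{n})(X_1\diamond\cdots\diamond X_n)\Vert\leq\Vert X_1\Vert\cdots\Vert X_n\Vert$, and similarly in the one-variable statements. I expect the only genuinely delicate point to be the choice of weights in the Cauchy--Schwarz step: they must sum over the simplex $\{k_1+\cdots+k_n=N\}$ to a clean function of $N$ and, once the factor $N!$ is restored, must let the remaining sum split as a product over $i$. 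The multinomial coefficients do both, and the key observation is that taking them \emph{uniform} reproduces precisely the weight $n^k$ on the $k$-th chaos --- that is, exactly $\Gamma(\sqrt{n})$. Everything else (the contraction property of symmetrization, the triangle inequality, the re-indexing) is routine.
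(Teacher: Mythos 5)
Your proof is correct and complete. Note that the paper itself offers no proof of this theorem: it only cites Theorem 9 of Kuo--Saito--Stan for the two-factor case and asserts that the $n$-factor version is a ``straightforward generalization.'' Your argument is precisely that generalization, worked out in full: the two-factor proof in the cited reference rests on a Cauchy--Schwarz estimate weighted by binomial coefficients, and your replacement of these by the multinomial coefficients $\binom{N}{k_1,\ldots,k_n}$ is exactly the right move --- the multinomial theorem turns the sum of the weights into $n^N$, the identity $N!/\binom{N}{k_1,\ldots,k_n}=k_1!\cdots k_n!$ lets the double sum over $N$ and over $k_1+\cdots+k_n=N$ re-index as a product over $i$ of $\sum_k n^k k!\,(a_k^{(i)})^2$, and the factor $n^k$ on the $k$-th chaos is by definition $\Vert\Gamma(\sqrt n)\,\cdot\,\Vert^2$. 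The preliminary reductions (symmetrization is an orthogonal projection, hence a contraction; triangle inequality on the kernel $k_N$) are standard and correctly used, the inequality holds trivially when some $\Vert\Gamma(\sqrt n)X_i\Vert=+\infty$, and the passage to the ``equivalent'' formulation via $\Gamma(\lambda)\Gamma(\mu)=\Gamma(\lambda\mu)$ and the multiplicativity of $\Gamma(\lambda)$ over $\diamond$ (properties (2.1)--(2.2) of the paper) is exactly as it should be. In short, you have supplied the proof the paper omits.
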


\begin{remark}
If $\lambda\in ]0,1]$, the operator $\Gamma(\lambda)$ can be
expressed in terms of the Ornstein-Uhlenbeck semigroup. In fact if
we write for $t\geq 0$,
\begin{eqnarray*}
(P_tX)(\omega):=\int_{\Omega}X(e^{-t}\omega+\sqrt{1-e^{-2t}}\tilde{\omega})d\mathcal{P}(\tilde{\omega}),
\end{eqnarray*}
then
\begin{eqnarray*}
\Gamma(\lambda)=\Gamma(e^{\log\lambda})=P_{-\log\lambda}.
\end{eqnarray*}
In particular
\begin{eqnarray*}
\Gamma\Big(\frac{1}{\sqrt{n}}\Big)=P_{\frac{1}{2}\log n}.
\end{eqnarray*}
\end{remark}

We conclude this section observing that for
$\lambda,\mu\in\mathbb{R}$,
\begin{eqnarray}
\Gamma(\mu)\Gamma(\lambda)=\Gamma(\mu\lambda),
\end{eqnarray}
and that for $X,Y\in\mathcal{L}^2(\mathcal{P})$,
\begin{eqnarray}
\Gamma(\lambda)(X\diamond
Y)=\Gamma(\lambda)X\diamond\Gamma(\lambda)Y.
\end{eqnarray}

\section{ Main results }
We are now ready to state one of the main results of this paper.

\begin{theorem}
Let $X\in\mathcal{L}^2(\mathcal{P})$ with $E[X]\neq 0$ and denote by
$h_1\in\mathcal{L}^2([0,T])$ the first-order kernel in the chaos
decomposition of $X$. Then
\begin{eqnarray*}
\Gamma\Big(\frac{1}{n}\Big)X^{\diamond
n}\in\mathcal{L}^2(\mathcal{P})\mbox{ for any }n\geq 1,
\end{eqnarray*}
and
\begin{eqnarray}
\lim_{n\to\infty}\frac{\Gamma(\frac{1}{n})X^{\diamond
n}}{E[X]^n}=\exp\Big\{\int_0^Th_1(s)dB_s-\frac{1}{2}\int_0^Th_1^2(s)ds\Big\},
\end{eqnarray}
where the convergence is in the strong topology of
$\mathcal{L}^2(\mathcal{P})$.
\end{theorem}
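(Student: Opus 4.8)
The plan is to normalize $X$ first: write $X = E[X](1 + Y)$ where $Y \in \mathcal{L}^2(\mathcal{P})$ has $E[Y] = 0$ and first-order kernel $h_1/E[X]$. Since $\Gamma$ and $\diamond$ scale appropriately, $\Gamma(1/n)X^{\diamond n}/E[X]^n = \Gamma(1/n)(1+Y)^{\diamond n}$, so it suffices to treat the case $E[X] = 1$, $X = 1 + Y$. Expanding by the (Wick) binomial theorem, $(1+Y)^{\diamond n} = \sum_{k=0}^n \binom{n}{k} Y^{\diamond k}$, hence $\Gamma(1/n)(1+Y)^{\diamond n} = \sum_{k=0}^n \binom{n}{k} \Gamma(1/n)Y^{\diamond k}$. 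Using the semigroup law (2.1) to split $\Gamma(1/n) = \Gamma(1/\sqrt{n})\Gamma(1/\sqrt{n})$ together with Theorem 2.1, one gets $\|\Gamma(1/n)Y^{\diamond k}\| \le \|\Gamma(1/\sqrt{n})\,\Gamma(1/\sqrt{n})Y^{\diamond k}\|$; the point is that applying a first factor $\Gamma(1/\sqrt{n})$ to $Y^{\diamond k}$ and then invoking Theorem 2.1 (in the form $\|\Gamma(1/\sqrt{k})Z^{\diamond k}\| \le \|Z\|^k$ once $k \le n$ so that $\Gamma(1/\sqrt{n})$ dominates $\Gamma(1/\sqrt{k})$ on each chaos) yields a bound like $\|\Gamma(1/n)Y^{\diamond k}\| \le \|\Gamma(1/\sqrt{n})Y\|^k$. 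This already shows each term is in $\mathcal{L}^2(\mathcal{P})$ and, since $\|\Gamma(1/\sqrt{n})Y\| \le \|Y\|$, that the whole sum is square integrable for every $n$.

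Next I would identify the limit term by term. The natural guess is that $\binom{n}{k}\Gamma(1/n)Y^{\diamond k} \to \frac{1}{k!}(I_1(h_1))^{\diamond k} = \frac{1}{k!} I_1(h_1)^{\diamond k}$, and that $\sum_{k\ge 0}\frac{1}{k!}I_1(h_1)^{\diamond k}$ is precisely the stochastic exponential on the right-hand side of (3.1) (recall the Wick exponential of $I_1(h_1)$ is $\exp\{\int_0^T h_1\,dB_s - \tfrac12\int_0^T h_1^2\,ds\}$, whose $\mathcal{L}^2$ norm is $\exp\{\int_0^T h_1^2\}$). For the term-by-term limit, decompose $Y = I_1(h_1) + R$ where $R$ collects the chaoses of order $0$ (which is $0$) and $\ge 2$. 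Expanding $Y^{\diamond k}$ multinomially in $I_1(h_1)$ and $R$, the term with all factors equal to $I_1(h_1)$ is $I_1(h_1)^{\diamond k} = I_k(h_1^{\otimes k})$, which lives in the $k$-th chaos and so $\Gamma(1/n)$ multiplies it by $n^{-k}$; together with $\binom{n}{k} \sim n^k/k!$ this gives the claimed limit $\frac{1}{k!}I_1(h_1)^{\diamond k}$. Every other multinomial term contains at least one factor of $R$, hence lies in a chaos of order $\ge k+1$ while carrying only $\binom{n}{k} = O(n^k)$ in front; after applying $\Gamma(1/n)$ the prefactor $n^k \cdot n^{-(k+1)} = n^{-1}$ forces these contributions to $0$ in $\mathcal{L}^2$. (Here one uses Theorem 2.1 again to control the $\mathcal{L}^2$ norms of the mixed Wick products uniformly.)

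The main obstacle is upgrading the term-by-term convergence to convergence of the full series in $\mathcal{L}^2(\mathcal{P})$ — i.e. justifying the interchange of the limit $n\to\infty$ with the sum over $k$. The strategy is a dominated-convergence argument in $\ell^2$: from the estimate $\|\binom{n}{k}\Gamma(1/n)Y^{\diamond k}\| \le \binom{n}{k} n^{-k}\|\Gamma(1/\sqrt{n})Y\|^k$ one wants a summable-in-$k$ bound uniform in $n$; since $\binom{n}{k}n^{-k} \le 1/k!$ and $\|\Gamma(1/\sqrt{n})Y\| \le \|Y\|$, the $k$-th term is bounded by $\|Y\|^k/k!$, which is summable. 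A slightly more careful bookkeeping — splitting the sum at $k = K$, using the uniform tail bound $\sum_{k>K}\|Y\|^k/k!$ to make the tail small uniformly in $n$, and using the term-by-term convergence on the finite initial segment $k \le K$ — then closes the argument. One should also verify orthogonality/near-orthogonality of the $Y^{\diamond k}$ across $k$ is not actually needed: the triangle inequality with the summable dominating sequence suffices. Finally, one checks that $\sum_{k\ge 0}\frac{1}{k!}I_1(h_1)^{\diamond k}$ equals the stochastic exponential in (3.1), which is the standard fact that the Wick exponential of a Gaussian random variable $G = I_1(h_1)$ is $\exp\{G - \tfrac12 E[G^2]\}$.
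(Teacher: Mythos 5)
Your route is genuinely different from the paper's. The paper never expands $X=1+Y$ in powers of $Y$: it writes $\mathcal{E}(h_1)=\mathcal{E}(h_1/n)^{\diamond n}$, applies the telescoping identity
$Y^{\diamond n}-Z^{\diamond n}=(Y-Z)\diamond\sum_{j=0}^{n-1}Y^{\diamond j}\diamond Z^{\diamond(n-1-j)}$
with $Y=\Gamma(1/n)X$ and $Z=\mathcal{E}(h_1/n)$, and then uses Theorem 2.1 to bound the difference by $\Vert\Gamma(\sqrt{2}/n)X-\mathcal{E}(\sqrt{2}h_1/n)\Vert$ times a sum of $n$ uniformly bounded terms; since $\Gamma(1/n)X$ and $\mathcal{E}(h_1/n)$ agree in the chaoses of order $0$ and $1$, the first factor is $O(n^{-2})$ and kills the $O(n)$ sum, with no interchange of limit and infinite series ever needed. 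Your binomial-expansion-plus-dominated-convergence scheme is a legitimate alternative and has the virtue of exhibiting the limit $\sum_{k\geq 0}\frac{1}{k!}I_k(h_1^{\otimes k})=\mathcal{E}(h_1)$ term by term, but it puts all of the weight on the uniform-in-$n$ summable bound, and that is exactly where your write-up has an error.

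The estimate $\Vert\binom{n}{k}\Gamma(\frac{1}{n})Y^{\diamond k}\Vert\leq\binom{n}{k}n^{-k}\Vert\Gamma(\frac{1}{\sqrt{n}})Y\Vert^{k}$ is false. Already for $k=1$ the left side is $n\Vert\Gamma(1/n)Y\Vert\geq |h_1|_{\mathcal{L}^2([0,T])}$ (keep only the first-chaos contribution), while the right side equals $\Vert\Gamma(1/\sqrt{n})Y\Vert\leq n^{-1/2}\Vert Y\Vert\to 0$. A sanity check confirms something must be wrong: if your bound held, every term with $k\geq 1$ would be $O(n^{-k/2})$ and the limit would be the constant $1$, contradicting your own (correct) term-by-term limit $\frac{1}{k!}I_k(h_1^{\otimes k})$. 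The underlying issue is that you cannot extract the full factor $n^{-k}$ from $\Gamma(1/n)$ \emph{and} keep an entire copy of $\Gamma(1/\sqrt{n})$ in reserve to tame $Y^{\diamond k}$, whose unregularized norm may be infinite. The correct accounting, via (2.1), (2.2) and Theorem 2.1, is $\Vert\Gamma(\frac{1}{n})Y^{\diamond k}\Vert=\Vert(\Gamma(\frac{1}{n})Y)^{\diamond k}\Vert\leq\Vert\Gamma(\frac{\sqrt{k}}{n})Y\Vert^{k}$; since $E[Y]=0$, for $k\leq n$ one has $\Vert\Gamma(\frac{\sqrt{k}}{n})Y\Vert^{2}=\sum_{j\geq 1}j!(k/n^{2})^{j}|g_j|^{2}\leq (k/n^{2})\Vert Y\Vert^{2}$, whence $\binom{n}{k}\Vert\Gamma(\frac{1}{n})Y^{\diamond k}\Vert\leq \frac{k^{k/2}}{k!}\Vert Y\Vert^{k}\leq (e\Vert Y\Vert/\sqrt{k})^{k}$, which is still summable in $k$ uniformly in $n$. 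With this replacement your splitting-at-$K$ argument closes, so the defect is a repairable bookkeeping error rather than a dead end; but as written the central inequality of your proof does not hold.
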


To ease the notation for $h\in\mathcal{L}^2([0,T])$ we set
\begin{eqnarray*}
\mathcal{E}(h):=\exp\Big\{\int_0^Th(s)dB_s-\frac{1}{2}\int_0^Th^2(s)ds\Big\}.
\end{eqnarray*}
The random variable $\mathcal{E}(h)$ belongs to
$\mathcal{L}^2(\mathcal{P})$ and its chaotic representation is
\begin{eqnarray*}
\mathcal{E}(h)=\sum_{n\geq 0}I_n\Big(\frac{h^{\otimes n}}{n!}\Big).
\end{eqnarray*}
From this identity one can easily derive the following properties:
\begin{eqnarray}
\Vert\mathcal{E}(h)\Vert\!\!&=&\!\!\exp\Big\{\frac{1}{2}|h|^2_{\mathcal{L}^2([0,T])}\Big\};\\
\Gamma(\lambda)\mathcal{E}(h)\!\!&=&\!\!\mathcal{E}(\lambda
h);\\
\mathcal{E}(h)\diamond\mathcal{E}(g)\!\!&=&\!\!\mathcal{E}(h+g).
\end{eqnarray}

\begin{proof}
First of all note that since $E[X]$ is a constant we can write
\begin{eqnarray*}
\frac{\Gamma(\frac{1}{n})X^{\diamond
n}}{E[X]^n}=\Gamma\Big(\frac{1}{n}\Big)\Big(\frac{X}{E[X]}\Big)^{\diamond
n};
\end{eqnarray*}
Therefore we can assume without loss of generality that $E[X]=1$ and
prove that
\begin{eqnarray}
\lim_{n\to\infty}\Gamma\Big(\frac{1}{n}\Big)X^{\diamond
n}=\mathcal{E}(h_1).
\end{eqnarray}

For any $n\geq 1$,
\begin{eqnarray*}
\Gamma\Big(\frac{1}{n}\Big)X^{\diamond
n}\in\mathcal{L}^2(\mathcal{P}).
\end{eqnarray*}
In fact according to Theorem 2.1,
\begin{eqnarray*}
\Big\Vert\Gamma\Big(\frac{1}{n}\Big)X^{\diamond
n}\Big\Vert&=&\Big\Vert\Gamma\Big(\frac{1}{\sqrt{n}}\Big)\Gamma\Big(\frac{1}{\sqrt{n}}\Big)X^{\diamond
n}\Big\Vert\\
&\leq&\Big\Vert\Gamma\Big(\frac{1}{\sqrt{n}}\Big)X^{\diamond
n}\Big\Vert\\
&\leq&\Vert X\Vert^n.
\end{eqnarray*}
Moreover for $n\geq 1$,
\begin{eqnarray*}
\mathcal{E}(h_1)&=&\mathcal{E}\Big(\underbrace{\frac{h_1}{n}+\cdot\cdot\cdot+\frac{h_1}{n}}_{n-times}\Big)\\
&=&\underbrace{\mathcal{E}\Big(\frac{h_1}{n}\Big)\diamond\cdot\cdot\cdot\diamond\mathcal{E}\Big(\frac{h_1}{n}\Big)}_{n-times}\\
&=&\mathcal{E}\Big(\frac{h_1}{n}\Big)^{\diamond n}.
\end{eqnarray*}

We have to prove that
\begin{eqnarray*}
\lim_{n\to\infty}\Big\Vert\Gamma\Big(\frac{1}{n}\Big)X^{\diamond
n}-\mathcal{E}(h_1)\Big\Vert=0.
\end{eqnarray*}
Since the Wick product is commutative, associative and distributive
with respect to the sum, the following identity holds:
\begin{eqnarray*}
Y^{\diamond n}-Z^{\diamond
n}=(Y-Z)\diamond\Big(\sum_{j=0}^{n-1}Y^{\diamond j}\diamond
Z^{\diamond (n-1-j)}\Big).
\end{eqnarray*}
Therefore from Theorem 2.1 and properties (3.2)-(3.4) we obtain
\begin{eqnarray*}
\Big\Vert\Gamma\Big(\frac{1}{n}\Big)X^{\diamond
n}-\mathcal{E}(h_1)\Big\Vert&=&\Big\Vert\Gamma\Big(\frac{1}{n}\Big)X^{\diamond
n}-\mathcal{E}\Big(\frac{h_1}{n}\Big)^{\diamond n}\Big\Vert\\
&=&\Big\Vert\Big(\Gamma\Big(\frac{1}{n}\Big)X-\mathcal{E}\Big(\frac{h_1}{n}\Big)\Big)\diamond
\Big(\sum_{j=0}^{n-1}\Gamma\Big(\frac{1}{n}\Big)X^{\diamond
j}\diamond \mathcal{E}\Big(\frac{h_1}{n}\Big)^{\diamond
(n-1-j)}\Big)\Big\Vert\\
&\leq&\Big\Vert\Gamma(\sqrt{2})\Big(\Gamma\Big(\frac{1}{n}\Big)X-\mathcal{E}\Big(\frac{h_1}{n}\Big)\Big)\Big\Vert\\
&&\times\Big\Vert\Gamma(\sqrt{2})\Big(\sum_{j=0}^{n-1}\Gamma\Big(\frac{1}{n}\Big)X^{\diamond
j}\diamond \mathcal{E}\Big(\frac{h_1}{n}\Big)^{\diamond
(n-1-j)}\Big)\Big\Vert\\
&=&\Big\Vert\Gamma\Big(\frac{\sqrt{2}}{n}\Big)X-\mathcal{E}\Big(\frac{\sqrt{2}h_1}{n}\Big)\Big\Vert\\
&&\times\Big\Vert\sum_{j=0}^{n-1}\Gamma\Big(\frac{\sqrt{2}}{n}\Big)X^{\diamond
j}\diamond \mathcal{E}\Big(\frac{\sqrt{2}h_1}{n}\Big)^{\diamond
(n-1-j)}\Big\Vert\\
&\leq&\Big\Vert\Gamma\Big(\frac{\sqrt{2}}{n}\Big)X-\mathcal{E}\Big(\frac{\sqrt{2}h_1}{n}\Big)\Big\Vert\\
&&\times\sum_{j=0}^{n-1}\Big\Vert\Gamma\Big(\frac{\sqrt{2}}{n}\Big)X^{\diamond
j}\diamond \mathcal{E}\Big(\frac{\sqrt{2}h_1}{n}\Big)^{\diamond
(n-1-j)}\Big\Vert\\
&\leq&\Big\Vert\Gamma\Big(\frac{\sqrt{2}}{n}\Big)X-\mathcal{E}\Big(\frac{\sqrt{2}h_1}{n}\Big)\Big\Vert\\
&&\times\sum_{j=0}^{n-1}\Big\Vert\Gamma(\sqrt{n-1})\Gamma\Big(\frac{\sqrt{2}}{n}\Big)X\Big\Vert^j
\Big\Vert\Gamma(\sqrt{n-1})\mathcal{E}\Big(\frac{\sqrt{2}h_1}{n}\Big)\Big\Vert^{n-1-j}\\
&=&\Big\Vert\Gamma\Big(\frac{\sqrt{2}}{n}\Big)X-\mathcal{E}\Big(\frac{\sqrt{2}h_1}{n}\Big)\Big\Vert\\
&&\times\sum_{j=0}^{n-1}\Big\Vert\Gamma\Big(\frac{\sqrt{2(n-1)}}{n}\Big)X\Big\Vert^j
\Big\Vert\mathcal{E}\Big(\frac{\sqrt{2(n-1)}h_1}{n}\Big)\Big\Vert^{n-1-j}.
\end{eqnarray*}
For $0\leq j\leq n-1$,
\begin{eqnarray*}
\Big\Vert\mathcal{E}\Big(\frac{\sqrt{2(n-1)}h_1}{n}\Big)\Big\Vert^{n-1-j}&=&\exp\Big\{\frac{n-1}{n^2}(n-1-j)|h_1|^2\Big\}\\
&\leq&\exp\Big\{\frac{(n-1)^2}{n^2}|h_1|^2\Big\}\\
&\leq&\exp\{|h_1|^2\}.
\end{eqnarray*}
Therefore
\begin{eqnarray*}
\Big\Vert\Gamma\Big(\frac{1}{n}\Big)X^{\diamond
n}-\mathcal{E}(h_1)\Big\Vert &\leq&
e^{|h_1|^2}\Big\Vert\Gamma\Big(\frac{\sqrt{2}}{n}\Big)X-\mathcal{E}\Big(\frac{\sqrt{2}h_1}{n}\Big)\Big\Vert\cdot
\sum_{j=0}^{n-1}\Big\Vert\Gamma\Big(\frac{\sqrt{2(n-1)}}{n}\Big)X\Big\Vert^j\\
&=&e^{|h_1|^2}\Big\Vert\Gamma\Big(\frac{\sqrt{2}}{n}\Big)X-\mathcal{E}\Big(\frac{\sqrt{2}h_1}{n}\Big)\Big\Vert
\frac{\Big\Vert\Gamma\Big(\frac{\sqrt{2(n-1)}}{n}\Big)X\Big\Vert^n-1}{\Big\Vert\Gamma\Big(\frac{\sqrt{2(n-1)}}{n}\Big)X\Big\Vert-1}.
\end{eqnarray*}
Now,
\begin{eqnarray*}
\lim_{n\to\infty}\Big\Vert\Gamma\Big(\frac{\sqrt{2(n-1)}}{n}\Big)X\Big\Vert^n&=&
\lim_{n\to\infty}\Big(1+\frac{2(n-1)}{n^2}|h_1|^2+o\Big(\frac{1}{n}\Big)\Big)^{\frac{n}{2}}\\
&=&\lim_{n\to\infty}\Big(1+\frac{2}{n}|h_1|^2+o\Big(\frac{1}{n}\Big)\Big)^{\frac{n}{2}}\\
&=&\exp{|h_1|^2},
\end{eqnarray*}
and
\begin{eqnarray*}
\lim_{n\to\infty}\frac{\Big\Vert\Gamma\Big(\frac{\sqrt{2}}{n}\Big)X-\mathcal{E}\Big(\frac{\sqrt{2}h_1}{n}\Big)\Big\Vert}
{\Big\Vert\Gamma\Big(\frac{\sqrt{2(n-1)}}{n}\Big)X\Big\Vert-1}&=&
\lim_{n\to\infty}\frac{\Big(2!\frac{4}{n^4}|h_2-h_1^{\otimes
2}|^2+o\Big(\frac{1}{n^4}\Big)\Big)^{\frac{1}{2}}}{\Big(1+\frac{2(n-1)}{n^2}|h_1|^2+o\Big(\frac{1}{n}\Big)\Big)^{\frac{1}{2}}-1}\\
&=&0.
\end{eqnarray*}
Hence we can conclude that
\begin{eqnarray*}
\lim_{n\to\infty}\Big\Vert\Gamma\Big(\frac{1}{n}\Big)X^{\diamond
n}-\mathcal{E}(h_1)\Big\Vert=0.
\end{eqnarray*}

\end{proof}

\begin{remark}
According to the observation of Remark 2.2 the statement of Theorem
3.1 can be formulated as follows:
\begin{eqnarray*}
\lim_{n\to\infty}\frac{P_{\log n}X^{\diamond
n}}{E[X]^n}=\exp\Big\{\int_0^Th_1(s)dB_s-\frac{1}{2}\int_0^Th_1^2(s)ds\Big\}.
\end{eqnarray*}
\end{remark}

Theorem 3.1 assumes that $E[X]\neq 0$. The case of zero mean random
variables is treated in the following theorem.

\begin{theorem}
Let $X\in\mathcal{L}^2(\mathcal{P})$ with $E[X]=0$. Assume that we
can find a sequence of real numbers $\{a_n\}_{n\geq 1}$ such that
\begin{eqnarray*}
\Gamma(a_n)X^{\diamond n}\in\mathcal{L}^2(\mathcal{P})\mbox{ for any
}n\geq 1,
\end{eqnarray*}
and
\begin{eqnarray*}
\lim_{n\to\infty}\Gamma(a_n)X^{\diamond n}
\end{eqnarray*}
exists in the strong topology of $\mathcal{L}^2(\mathcal{P})$. Then
the limit must be zero.
\end{theorem}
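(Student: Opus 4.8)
The plan is to show that the limit $Y:=\lim_{n\to\infty}\Gamma(a_n)X^{\diamond n}$ has zero projection onto every Wiener chaos, hence is $0$. Write $\mathcal{L}^2(\mathcal{P})=\bigoplus_{m\geq 0}\mathcal{H}_m$, where $\mathcal{H}_m=\{I_m(g):g\in\mathcal{L}^2([0,T]^m)\text{ symmetric}\}$ is the $m$-th Wiener chaos, and let $\pi_m:\mathcal{L}^2(\mathcal{P})\to\mathcal{H}_m$ be the corresponding orthogonal projection; it has operator norm $1$, in particular it is continuous. It suffices to prove that $\pi_m(Y)=0$ for all $m\geq 0$, since then $Y=\sum_{m\geq 0}\pi_m(Y)=0$.

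The key structural observation is that, because $E[X]=0$, in the chaos expansion $X=\sum_{m\geq 1}I_m(h_m)$ the zeroth-order kernel vanishes. By the very definition of the Wick product recalled in Section 2, every summand appearing in $X^{\diamond n}$ is of the form $I_{m_1+\cdots+m_n}(h_{m_1}\hat{\otimes}\cdots\hat{\otimes}h_{m_n})$ with each $m_i\geq 1$; a straightforward induction on $n$ (or the identity $X^{\diamond n}=X^{\diamond(n-1)}\diamond X$ together with $h_0=0$) then shows that
\begin{eqnarray*}
\pi_m\big(X^{\diamond n}\big)=0\qquad\text{for all }m<n .
\end{eqnarray*}
Since the operator $\Gamma(a_n)$ acts on $\mathcal{H}_m$ by multiplication by $a_n^m$, it maps each $\mathcal{H}_m$ into itself, so $\pi_m\big(\Gamma(a_n)X^{\diamond n}\big)=a_n^m\,\pi_m\big(X^{\diamond n}\big)=0$ whenever $n>m$. (Here we use that $\Gamma(a_n)X^{\diamond n}\in\mathcal{L}^2(\mathcal{P})$ by hypothesis, so this identity is between genuine $\mathcal{L}^2$ elements.)

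Fix $m\geq 0$. By continuity of $\pi_m$ and the assumed $\mathcal{L}^2(\mathcal{P})$-convergence $\Gamma(a_n)X^{\diamond n}\to Y$, we get $\pi_m(Y)=\lim_{n\to\infty}\pi_m\big(\Gamma(a_n)X^{\diamond n}\big)$; but the sequence on the right is identically zero for $n>m$, so $\pi_m(Y)=0$. As $m$ was arbitrary, $Y=0$, which is the claim. (If $X=0$ the statement is trivial, so this covers all cases.)

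The argument is entirely ``soft'': the only point that needs genuine care is the structural claim $\pi_m(X^{\diamond n})=0$ for $m<n$, i.e. that the Wick product raises the minimal chaos order additively. This requires just a careful bookkeeping of orders of symmetrized tensor products in the inductive step $X^{\diamond n}=X^{\diamond(n-1)}\diamond X$, and in particular must accommodate the possibility that the first-order kernel $h_1$ itself vanishes (in which case the minimal order jumps by more than one, but the bound $\geq n$ still holds). Everything else—continuity of the chaos projections and the direct-sum decomposition of $\mathcal{L}^2(\mathcal{P})$—is standard and needs no estimates.
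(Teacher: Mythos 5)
Your proof is correct and is essentially the paper's argument in direct form: both rest on the observation that $E[X]=0$ forces the chaos expansion of $X^{\diamond n}$ (hence of $\Gamma(a_n)X^{\diamond n}$) to start at order at least $n$, and that strong $\mathcal{L}^2(\mathcal{P})$-convergence preserves chaos projections. The paper merely phrases this as a proof by contradiction, testing weak convergence against $I_{n_0}(z_{n_0})$ where $n_0$ is the lowest nonzero chaos order of the putative limit, which is the same computation as your $\pi_m(Y)=\lim_n \pi_m(\Gamma(a_n)X^{\diamond n})=0$.
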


\begin{proof}
Suppose there exist a sequence of real numbers $\{a_n\}_{n\geq 1}$
and $Z\in\mathcal{L}^2(\mathcal{P})$, $Z\neq 0$ such that
\begin{eqnarray*}
\Gamma(a_n)X^{\diamond n}\in\mathcal{L}^2(\mathcal{P})\mbox{ for any
}n\geq 1,
\end{eqnarray*}
and
\begin{eqnarray*}
\lim_{n\to\infty}\Big\Vert\Gamma(a_n)X^{\diamond n}-Z\Big\Vert=0.
\end{eqnarray*}
Define
\begin{eqnarray*}
n_0:=\min\{n\geq 0: z_n\neq 0\},
\end{eqnarray*}
where $z_0=E[X]$ and for $n\geq 1$, $z_n\in\mathcal{L}^2([0,T]^n)$
is the $n$-th order
kernel in the Wiener-It\^o chaos decomposition of $Z$.\\
Since $Z$ is the strong $\mathcal{L}^2(\mathcal{P})$-limit of the
sequence $\Gamma(a_n)X^{\diamond n}$, it is also its weak limit,
that means
\begin{eqnarray*}
\lim_{n\to\infty}E[\Gamma(a_n)X^{\diamond n}U]=E[ZU],
\end{eqnarray*}
for all $U\in\mathcal{L}^2(\mathcal{P})$.\\
Since $E[X]=0$ the first non zero term in the Wiener-It\^o chaos
decomposition of $\Gamma(a_n)X^{\diamond n}$ is at least of order
$n$ and therefore for any $n>n_0$ we have
\begin{eqnarray*}
E[\Gamma(a_n)X^{\diamond n}I_{n_0}(z_{n_0})]=0,
\end{eqnarray*}
(by the orthogonality of homogenous chaos of different orders) and
hence
\begin{eqnarray*}
\lim_{n\to\infty}E[\Gamma(a_n)X^{\diamond n}I_{n_0}(z_{n_0})]=0.
\end{eqnarray*}
On the other hand
\begin{eqnarray*}
E[ZI_{n_0}(z_{n_0})]=n_0!|z_{n_0}|^2>0,
\end{eqnarray*}
by the definition of $n_0$. This means that $Z$ is not the weak
limit of the sequence $\Gamma(a_n)X^{\diamond n}$. This
contradiction completes the proof.

\end{proof}

If $\{X_n\}_{n\geq 1}$ is a sequence of random variables, by the
symbol
\begin{eqnarray*}
X_n\Rightarrow X\mbox{ as }n\to\infty,
\end{eqnarray*}
we mean that the sequence $\{X_n\}_{n\geq 1}$ converges  in
distribution as $n$ goes to infinity to the random variable $X$.

Since convergence in $\mathcal{L}^2(\mathcal{P})$ is stronger than
convergence in distribution we have the following result.

\begin{corollary}
Let $X\in\mathcal{L}^2(\mathcal{P})$ with $E[X]\neq 0$ and denote by
$h_1\in\mathcal{L}^2([0,T])$ the first-order kernel in the chaos
decomposition of $X$. Then for any $n\geq 2$ the distribution of the
random variable
\begin{eqnarray*}
\Gamma\Big(\frac{1}{n}\Big)X^{\diamond n},
\end{eqnarray*}
is absolutely continuous with respect to the Lebesgue measure on
$\mathbb{R}$. Moreover
\begin{eqnarray*}
\frac{\Gamma\Big(\frac{1}{n}\Big)X^{\diamond n}}{E[X]^n}\Rightarrow
Y\mbox{ as }n\to\infty,
\end{eqnarray*}
where $Y$ is either a log-normal random variable, more precisely
$\ln Y$ is a normal random variable with mean
$-\frac{1}{2}|h_1|_{\mathcal{L}^2([0,T])}^2$ and variance
$|h_1|_{\mathcal{L}^2([0,T])}^2$ or $Y=1$.
\end{corollary}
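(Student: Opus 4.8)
The plan is to deduce both assertions from Theorem 3.1. The convergence in distribution is almost immediate: Theorem 3.1 gives $\Gamma(1/n)X^{\diamond n}/E[X]^n\to\mathcal{E}(h_1)$ in the strong topology of $\mathcal{L}^2(\mathcal{P})$, hence in probability and therefore in distribution, so $Y$ has the law of $\mathcal{E}(h_1)$. It then remains to identify this law. By definition $\log\mathcal{E}(h_1)=\int_0^Th_1(s)\,dB_s-\frac{1}{2}\int_0^Th_1^2(s)\,ds$, and $\int_0^Th_1(s)\,dB_s=I_1(h_1)$ is, by the It\^o isometry, a centered Gaussian random variable with variance $|h_1|_{\mathcal{L}^2([0,T])}^2$. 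Hence $\log\mathcal{E}(h_1)$ is Gaussian with mean $-\frac{1}{2}|h_1|_{\mathcal{L}^2([0,T])}^2$ and variance $|h_1|_{\mathcal{L}^2([0,T])}^2$, so $\mathcal{E}(h_1)$ is log-normal when $h_1\neq 0$; when $h_1=0$ one has $\mathcal{E}(h_1)=\exp\{0\}=1$, which is the case $Y=1$.

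For the absolute continuity I would exploit the smoothing effect of $\Gamma(1/\sqrt n)$, which is available precisely because $n\geq 2$. Write $\Gamma(1/n)X^{\diamond n}=\Gamma(1/\sqrt n)\bigl(\Gamma(1/\sqrt n)X^{\diamond n}\bigr)$; by Theorem 2.1 the random variable $G_n:=\Gamma(1/\sqrt n)X^{\diamond n}$ belongs to $\mathcal{L}^2(\mathcal{P})$, and by Remark 2.2 the outer operator equals $P_t$ with $t=\frac{1}{2}\log n>0$. Since the $k$-th chaos kernel of $P_tG_n$ carries the factor $e^{-kt}$ and $\sup_{k}k\,e^{-2kt}<\infty$, the random variable $F_n:=\Gamma(1/n)X^{\diamond n}=P_tG_n$ lies in $\mathbb{D}^{1,2}$ (in fact in $\mathbb{D}^{\infty}$). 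By the Bouleau--Hirsch criterion (see \cite{N}), the law of $F_n$ is absolutely continuous with respect to Lebesgue measure as soon as $|DF_n|_{\mathcal{L}^2([0,T])}>0$ almost surely. Because $P_t$ multiplies the $k$-th chaos component by $e^{-kt}\neq 0$, it sends non-constant random variables to non-constant ones, so $G_n$ is non-constant whenever $X$ is; thus it suffices to prove that the Malliavin derivative of $P_tG$ vanishes nowhere, for every $t>0$ and every non-constant $G\in\mathcal{L}^2(\mathcal{P})$.

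This last step is the main obstacle. The route I would follow starts from the integral representation of $P_t$ recalled in Remark 2.2, namely $P_tG(\omega)=\int_\Omega G\bigl(e^{-t}\omega+\sqrt{1-e^{-2t}}\,\tilde\omega\bigr)\,d\mathcal{P}(\tilde\omega)$, combined with the Cameron--Martin theorem: for $h\in\mathcal{L}^2([0,T])$ and $\alpha=e^{-t}/\sqrt{1-e^{-2t}}$, one rewrites $P_tG(\omega+zh)$ as the integral of $G\bigl(e^{-t}\omega+\sqrt{1-e^{-2t}}\,\tilde\omega\bigr)$ against $\exp\{\alpha z\,I_1(h)(\tilde\omega)-\frac{1}{2}\alpha^2z^2|h|_{\mathcal{L}^2([0,T])}^2\}$. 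Since $G\in\mathcal{L}^2(\mathcal{P})$ and $I_1(h)$ has finite exponential moments, this integral defines (for a.e. $\omega$) an entire function of $z$, so $P_tG$ is real-analytic along every Cameron--Martin direction. Consequently $\Psi:=|DP_tG|_{\mathcal{L}^2([0,T])}^2$ is nonnegative and real-analytic along such directions, and a standard real-analyticity argument then forces $\Psi$ to vanish identically as soon as it vanishes on a set of positive $\mathcal{P}$-measure; but $\Psi\equiv 0$ means $P_tG$ is $\mathcal{P}$-a.s. constant, contradicting the non-constancy of $G$. Alternatively one may simply invoke the known regularization property of the Ornstein--Uhlenbeck semigroup. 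The only configuration escaping this analysis is a nonzero constant $X$, where $F_n=E[X]^n$ is deterministic; the absolute-continuity assertion is to be read for non-deterministic $X$, and in all such cases the argument above applies. Together with the convergence in distribution already established, this completes the proof.
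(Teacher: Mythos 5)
Your treatment of the convergence in distribution is exactly the paper's: pass from the strong $\mathcal{L}^2(\mathcal{P})$-limit of Theorem 3.1 to convergence in law, and read off the distribution of $\mathcal{E}(h_1)$ from the Gaussianity of $I_1(h_1)$, with $Y=1$ when $h_1=0$. For the absolute continuity you also use the paper's decomposition $\Gamma(1/n)X^{\diamond n}=\Gamma(1/\sqrt{n})\,\Gamma(1/\sqrt{n})X^{\diamond n}$ together with Theorem 2.1 to see that the inner factor is square integrable; but where the paper simply cites Theorem 4.24 of Janson for the regularizing effect of $\Gamma(1/\sqrt{n})=P_{\frac{1}{2}\log n}$ on $\mathcal{L}^2$ variables, you attempt to reprove that effect via Malliavin calculus: membership of $F_n$ in $\mathbb{D}^{1,2}$ (correct, via $\sup_k k e^{-2kt}<\infty$), the Bouleau--Hirsch criterion, and a positivity argument for $|DF_n|$. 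The gain is self-containedness; the cost is that the positivity step is the genuinely hard part, and your sketch leaves two nontrivial claims unjustified: first, that $\Psi=|DP_tG|^2_{\mathcal{L}^2([0,T])}$, an infinite sum of squares of directionally analytic functions, is itself real-analytic along Cameron--Martin directions (this requires locally uniform control of the series, not merely the analyticity of $P_tG$); second, the zero-one-law step that upgrades the vanishing of $\Psi$ on a set of positive measure to $\Psi\equiv 0$. Both can be repaired, but the clean route is the one you mention in passing and the one the paper actually takes, namely quoting the known regularization theorem for the Ornstein--Uhlenbeck semigroup. One point where you improve on the paper: you correctly observe that if $X$ is a nonzero constant then $\Gamma(1/n)X^{\diamond n}=E[X]^n$ is deterministic and its law is a point mass, so the absolute-continuity assertion needs an implicit non-degeneracy hypothesis that the paper's statement silently omits.
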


\begin{proof}
Observe that
\begin{eqnarray*}
\Gamma\Big(\frac{1}{n}\Big)X^{\diamond
n}&=&\Gamma\Big(\frac{1}{\sqrt{n}}\Big)\Gamma\Big(\frac{1}{\sqrt{n}}\Big)X^{\diamond
n}\\
&=&\Gamma\Big(\frac{1}{\sqrt{n}}\Big)Z
\end{eqnarray*}
where we set $Z:=\Gamma\Big(\frac{1}{\sqrt{n}}\Big)X^{\diamond n}$.
From Theorem 2.1 we know that $Z\in\mathcal{L}^2(\mathcal{P})$ and
therefore $\Gamma\Big(\frac{1}{n}\Big)X^{\diamond n}$ can be written
as the image through the operator $\Gamma(\frac{1}{\sqrt{n}})$ of a
square integrable random variable. By Theorem 4.24 in \cite{J} this
implies the absolute continuity of the distribution of
$\Gamma\Big(\frac{1}{n}\Big)X^{\diamond
n}$.\\
The rest of the proof follows from Theorem 3.1 and the fact that if
$h_1\neq 0$ then $\int_0^Th_1(s)dB_s$ is a zero mean gaussian random
variable with variance $|h_1|_{\mathcal{L}^2([0,T])}^2$.
\end{proof}

If $X$ is assumed to be non negative then Corollary 3.4 can be
reformulated as follows.

\begin{corollary}
Let $X\in\mathcal{L}^2(\mathcal{P})$ be a non negative random
variable with $\mathcal{P}(X>0)>0$ and denote by
$h_1\in\mathcal{L}^2([0,T])$ the first-order kernel in the chaos
decomposition of $X$. Then for any $n\geq 2$,
\begin{eqnarray*}
\mathcal{P}\Big(\Gamma\Big(\frac{1}{n}\Big)X^{\diamond n}>0\Big)=1,
\end{eqnarray*}
and
\begin{eqnarray*}
\ln\Gamma\Big(\frac{1}{n}\Big)X^{\diamond n}-n\ln E[X]\Rightarrow
Z\mbox{ as }n\to\infty,
\end{eqnarray*}
where $Z$ is either a normal random variable with mean
$-\frac{1}{2}|h_1|_{\mathcal{L}^2([0,T])}^2$ and variance
$|h_1|_{\mathcal{L}^2([0,T])}^2$ or $Z=0$.
\end{corollary}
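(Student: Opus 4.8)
The plan is to reduce the whole statement to the positivity assertion $\mathcal{P}\big(\Gamma(\tfrac1n)X^{\diamond n}>0\big)=1$ for $n\geq 2$ and to read off the convergence in distribution from this together with Corollary 3.4. Since $E[X]>0$ one has
\[
\ln\Gamma\Big(\tfrac1n\Big)X^{\diamond n}-n\ln E[X]=\ln\frac{\Gamma(\tfrac1n)X^{\diamond n}}{E[X]^n},
\]
and Corollary 3.4 gives $\Gamma(\tfrac1n)X^{\diamond n}/E[X]^n\Rightarrow Y$, where $\ln Y\sim N(-\tfrac12|h_1|^2,|h_1|^2)$ if $h_1\neq 0$ and $Y=1$ if $h_1=0$; in both cases $\mathcal{P}(Y>0)=1$. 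Once each random variable $\Gamma(\tfrac1n)X^{\diamond n}$ is known to be a.s.\ strictly positive, the continuous mapping theorem applied with $g=\ln$, which is continuous on $]0,+\infty[$, yields $\ln\big(\Gamma(\tfrac1n)X^{\diamond n}/E[X]^n\big)\Rightarrow\ln Y$; and $Z:=\ln Y$ is normal with mean $-\tfrac12|h_1|^2$ and variance $|h_1|^2$ when $h_1\neq 0$ and $Z=0$ when $h_1=0$, as required.

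It therefore remains to prove the positivity. Put $W:=\Gamma(\tfrac1{\sqrt n})X^{\diamond n}$, which lies in $\mathcal{L}^2(\mathcal{P})$ by Theorem 2.1. I would show that $W\geq 0$ a.s.\ via the $S$-transform $S\Phi(\eta):=E[\Phi\,\mathcal{E}(\eta)]$, which turns the Wick product into the ordinary product and satisfies $S(\Gamma(\lambda)\Phi)(\eta)=S\Phi(\lambda\eta)$; hence
\[
SW(\eta)=\big(SX(\tfrac{\eta}{\sqrt n})\big)^n=e^{-\frac12|\eta|^2}\Big(E\big[X\,e^{\frac1{\sqrt n}\int_0^T\eta(s)\,dB_s}\big]\Big)^n .
\]
Because $X\geq 0$, the map $\zeta\mapsto E[X\,e^{\int_0^T\zeta(s)\,dB_s}]$ is the Laplace transform of the finite positive measure $\mu:=X\,d\mathcal{P}$; consequently its $n$th power evaluated at $\eta/\sqrt n$ is the Laplace transform of the positive measure $\nu$ obtained by pushing $\mu$ forward under $\omega\mapsto\omega/\sqrt n$ and convolving the resulting measure with itself $n$ times. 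Thus $SW$ equals $e^{-\frac12|\eta|^2}$ times the Laplace transform of $\nu$, which is precisely the $S$-transform of the positive generalized random variable with associated measure $\nu$; a positive generalized random variable that also belongs to $\mathcal{L}^2(\mathcal{P})$ is nonnegative almost surely, so $W\geq 0$. Finally, by Remark 2.2, $\Gamma(\tfrac1n)X^{\diamond n}=\Gamma(\tfrac1{\sqrt n})W=P_{\frac12\log n}W$, and $P_t$, being integration against the nonnegative Mehler kernel, preserves nonnegativity; hence $\Gamma(\tfrac1n)X^{\diamond n}\geq 0$ a.s. Since, by Corollary 3.4, the law of $\Gamma(\tfrac1n)X^{\diamond n}$ is absolutely continuous with respect to Lebesgue measure, $\mathcal{P}(\Gamma(\tfrac1n)X^{\diamond n}=0)=0$, and therefore $\mathcal{P}(\Gamma(\tfrac1n)X^{\diamond n}>0)=1$.

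The step I expect to be the main obstacle is precisely the nonnegativity of $W$: this is the one point where one leaves the $\mathcal{L}^2$-framework of the rest of the paper and invokes, on one side, that under the $S$-transform the Wick product becomes the ordinary product, so that for positive objects it corresponds to convolution of the associated positive measures (the viewpoint of \cite{KSS}), and, on the other side, the characterization of positive generalized random variables by positive measures (see \cite{J} and the literature on positive Hida distributions). One also has to track the Gaussian prefactors carefully: the $n$th power inflates the exponent of the $S$-transform from $-\tfrac12|\eta|^2$ to $-\tfrac n2|\eta|^2$, and it is exactly the rescaling by $1/\sqrt n$ --- the same rescaling that makes Theorem 2.1 work --- that restores the factor $e^{-\frac12|\eta|^2}$ needed to identify a bona fide positive measure; indeed $X^{\diamond n}$ itself, without this rescaling, need not be a positive generalized random variable. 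Once this identification is secured, Theorem 2.1 brings us back into $\mathcal{L}^2(\mathcal{P})$ and the remaining steps --- the Ornstein--Uhlenbeck regularization and the passage from nonnegativity to strict positivity via Corollary 3.4 --- are routine.
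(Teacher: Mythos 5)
Your proof is correct and follows the same skeleton as the paper's: reduce the distributional statement to Corollary 3.4 plus the continuous mapping theorem, prove that $W=\Gamma(1/\sqrt n)X^{\diamond n}$ is a.s.\ non negative by exploiting the multiplicativity of a transform under the Wick product, and then upgrade to strict positivity using the smoothing effect of $\Gamma(1/\sqrt n)=P_{\frac12\log n}$. The two places where you genuinely diverge are the key lemmas. For the non negativity of $W$, the paper evaluates $E[Z\mathcal{E}(ih)]e^{-\frac12|h|^2}=\big(E[X\mathcal{E}(ih/\sqrt n)]e^{-\frac1{2n}|h|^2}\big)^n$ and invokes the Nualart--Zakai criterion (Theorem 4.1 in \cite{NZ}): a product of positive definite functions is positive definite, and that is the whole argument. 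You instead work with the real $S$-transform and identify $SW$ as $e^{-\frac12|\eta|^2}$ times the Laplace transform of the $n$-fold convolution of the pushforward of $X\,d\mathcal{P}$ under $\omega\mapsto\omega/\sqrt n$; this is the "Wick product as convolution" picture of \cite{KSS} made literal, and it is more constructive (you exhibit the representing measure), but it costs you some unaddressed technicalities: the pairing $\langle\eta,\omega\rangle$ is only $\mathcal{P}$-a.s.\ defined for $\eta\in\mathcal{L}^2([0,T])$, so the convolution measure and its Laplace transform need to be set up on test functions $\eta$ for which $\omega\mapsto\langle\eta,\omega\rangle$ is genuinely linear, and you need to cite a precise converse of the measure characterization of positive Wiener functionals. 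The paper's complex-exponential route avoids all of this because positive definiteness is stable under products with no construction required. For the passage from $W\ge 0$ to $\mathcal{P}(\Gamma(1/\sqrt n)W>0)=1$, the paper cites the positivity-improving property of $P_t$ (Corollary 4.29 in \cite{J}, which needs only $W\ge0$, $W\ne0$, guaranteed here by $E[W]=E[X]^n>0$), whereas you combine non negativity with the absolute continuity from Corollary 3.4; your route works but silently excludes the case of constant $X$, where the law is a Dirac mass and absolute continuity fails (positivity being then trivial). Both of these are minor and repairable; the argument as a whole is sound.
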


\begin{proof}
The second assertion is a straightforward consequence of Corollary
3.4 since convergence in distribution is preserved under the action
of continuous functions.\\
We have to prove that for any $n\geq 2$,
\begin{eqnarray*}
\mathcal{P}\Big(\Gamma\Big(\frac{1}{n}\Big)X^{\diamond n}>0\Big)=1.
\end{eqnarray*}
Let us write as before
\begin{eqnarray*}
\Gamma\Big(\frac{1}{n}\Big)X^{\diamond
n}=\Gamma\Big(\frac{1}{\sqrt{n}}\Big)Z,
\end{eqnarray*}
where
\begin{eqnarray*}
Z=\Gamma\Big(\frac{1}{\sqrt{n}}\Big)X^{\diamond n}.
\end{eqnarray*}
We want to prove that $Z$ is non negative; according to Theorem 4.1
in \cite{NZ} this is equivalent to prove that the function
\begin{eqnarray*}
h\in\mathcal{L}^2([0,T])\mapsto
E[Z\mathcal{E}(ih)]e^{-\frac{1}{2}|h|^2_{\mathcal{L}^2([0,T])}}\in\mathbb{C},
\end{eqnarray*}
where $i$ is the imaginary unit, is positive definite. Since
$\Gamma\Big(\frac{1}{\sqrt{n}}\Big)$ is self-adjoint in
$\mathcal{L}^2(\mathcal{P})$ and for any $h\in\mathcal{L}^2([0,T])$,
\begin{eqnarray*}
E[(X\diamond Y)\mathcal{E}(h)]=E[X\mathcal{E}(h)]E[Y\mathcal{E}(h)],
\end{eqnarray*}
we get
\begin{eqnarray*}
E[Z\mathcal{E}(ih)]e^{-\frac{1}{2}|h|^2_{\mathcal{L}^2([0,T])}}&=&
E\Big[\Gamma\Big(\frac{1}{\sqrt{n}}\Big)X^{\diamond
n}\mathcal{E}(ih)\Big]e^{-\frac{1}{2}|h|^2_{\mathcal{L}^2([0,T])}}\\
&=&E\Big[X^{\diamond
n}\mathcal{E}\Big(\frac{ih}{\sqrt{n}}\Big)\Big]e^{-\frac{1}{2}|h|^2_{\mathcal{L}^2([0,T])}}\\
&=&\Big(E\Big[X\mathcal{E}\Big(\frac{ih}{\sqrt{n}}\Big)\Big]\Big)^ne^{-\frac{1}{2}|h|^2_{\mathcal{L}^2([0,T])}}\\
&=&\Big(E\Big[X\mathcal{E}\Big(\frac{ih}{\sqrt{n}}\Big)\Big]
e^{-\frac{1}{2n}|h|^2_{\mathcal{L}^2([0,T])}}e^{\frac{1}{2n}|h|^2_{\mathcal{L}^2([0,T])}}\Big)^ne^{-\frac{1}{2}|h|^2_{\mathcal{L}^2([0,T])}}\\
&=&\Big(E\Big[X\mathcal{E}\Big(\frac{ih}{\sqrt{n}}\Big)\Big]
e^{-\frac{1}{2n}|h|^2_{\mathcal{L}^2([0,T])}}\Big)^ne^{\frac{1}{2}|h|^2_{\mathcal{L}^2([0,T])}}e^{-\frac{1}{2}|h|^2_{\mathcal{L}^2([0,T])}}\\
&=&\Big(E\Big[X\mathcal{E}\Big(\frac{ih}{\sqrt{n}}\Big)\Big]
e^{-\frac{1}{2n}|h|^2_{\mathcal{L}^2([0,T])}}\Big)^n.
\end{eqnarray*}
Since $X$ is by assumption non negative, the function
\begin{eqnarray*}
E\Big[X\mathcal{E}\Big(\frac{ih}{\sqrt{n}}\Big)\Big]
e^{-\frac{1}{2n}|h|^2_{\mathcal{L}^2([0,T])}}
\end{eqnarray*}
is positive definite as well as its $n$-th power, proving the non
negativity of $Z$. \\
The image through the operator $\Gamma\Big(\frac{1}{\sqrt{n}}\Big)$
of a non negative random variable is in virtue of Corollary 4.29 in
\cite{J} a $\mathcal{P}$-a.s. positive random variable. Since
$X=\Gamma\Big(\frac{1}{\sqrt{n}}\Big)Z$ the previous assertion
applies to $X$ and completes the proof.
\end{proof}

\end{document}